\documentclass[12pt,oneside]{amsart}
\usepackage{amsmath}
\usepackage{amsthm}
\usepackage{amsfonts}
\usepackage{amssymb}
\usepackage{enumerate}

\newtheorem{theorem}{Theorem}

\newtheorem{proposition}[theorem]{Proposition}
\newtheorem{corollary}[theorem]{Corollary}

\theoremstyle{definition}

\newtheorem{example}[theorem]{Example}

\theoremstyle{remark}
\newtheorem{remark}[theorem]{Remark}
%




\renewcommand{\phi}{\varphi}

\hyphenation{plu-ri-po-lar} \hyphenation{pluri-sub-harmonic}

\subjclass[2010]{32C15, 32T40, 32F45}


\begin{document}

\title[Symmetric powers of complex manifolds]{Function theoretic properties of symmetric powers of complex manifolds}


\address{Institute of Mathematics, Faculty of Mathematics and Computer Science, Jagiellonian
University,  \L ojasiewicza 6, 30-348 Krak\'ow, Poland}

\author{W\l odzimierz Zwonek}
\email{wlodzimierz.zwonek@uj.edu.pl}

\thanks{The author was partially  supported by the
OPUS grant no. 2015/17/B/ST1/00996 financed by the National
Science Centre, Poland.}
\keywords{Symmetric power of complex manifolds, (quasi) $c$-finite compactness, peak functions, symmetrized polydisc, Kobayashi and Carath\'eodory hyperbolicity (completeness)}

\begin{abstract} In the paper we study properties of symmetric powers of complex manifolds. We investigate a number of function theoretic properties (e. g. (quasi) $c$-finite compactness, existence of peak functions) that are preserved by taking the symmetric power. The case of symmetric products of planar domains is studied in a more detailed way. In particular, a complete description of the Carath\'eodory and Kobayashi hyperbolicity and Kobayashi completeness in that class of domains is presented.
\end{abstract}

\maketitle


\section{Introduction}

Let $X$ be a connected complex manifold of dimension $m$. We define its {\it $n$-th symmmetric power $X^n_{sym}$} as the quotient $X^n$ under the 
action of the group of all permutations of $\{1,\ldots,n\}$. Recall that $X_{sym}^n$ has the structure of a complex analytic space. In the case when $m=1$ the space $X^n_{sym}$ is actually a complex manifold. If $X=D\subset\mathbb C$ is a domain then we have a realization of $D_{sym}^n$ as a domain in $\mathbb C^n$. More precisely, its biholomorphic realization is the following \textit{$n$-dimensional symmetrization} (or {\it symmetric product of planar domains})
\begin{equation}
S_n(D):=\pi_n(D^n),
\end{equation}
where $\pi_n:\mathbb C^n\mapsto\mathbb C^n$ is {\it the symmetrization map} (the $j$-th coordinate is the $j$-th elementary symmetric polynomial). 
In other words $\pi_{n,j}(\lambda_1,\ldots,,\lambda_n)=\sigma_j(\lambda_1,\ldots,\lambda_n)$, $\lambda_j\in\mathbb C$, $j=1,\ldots,n$, where $\sigma_j$ satisfies the equality
\begin{equation}
(\lambda-\lambda_1)\cdot\ldots\cdot (\lambda-\lambda_n)=\lambda^n+\sum_{j=1}^n(-1)^j\sigma_j(\lambda_1,\ldots,\lambda_n)\lambda^{n-j},\;\lambda\in\mathbb C.
\end{equation}
As to the background on basic properties of symmetric powers we refer the Reader e. g. to \cite{Whi 1972}.

\subsection{Description of results} 
In the paper we present a number of properties of $X^n_{sym}$. Some of them may be obtained from general properties of the realization of $\mathbb D^n_{sym}$ as a domain, i. e. the so called {\it the symmetrized polydisc $\mathbb G_n:=S_n(\mathbb D)$}, ($\mathbb D$ denotes the unit disc in $\mathbb C$). The last domain has been extensively studied in the last two decades (see for instance \cite{Agl-You 2004}, \cite{Cos 2004}, \cite{Edi-Zwo 2005}, \cite{Jar-Pfl 2013} and references there).

The starting point for considerations in the paper were inspired by recent developments on the function theory in symmetric powers (see e. g. \cite{Cha-Gor 2015}, \cite{Bha-Bis-Div-Jan 2018} and \cite{Cha-Grow 2018}).

First we present a more general result to that of Theorem 1.4 in \cite{Bha-Bis-Div-Jan 2018} where the proof of the Kobayashi completeness of symmetric powers of some of Riemann surfaces relies on the proof of existence of peak functions. Similarly as in \cite{Bha-Bis-Div-Jan 2018} the presentation below actually deals with a stronger version of completeness - the $c$-finite compactness and shows that the notion (more precisely, the weaker notion of quasi $c$-finite compactness) is preserved under taking the symmetric power, which is done in a general case of complex manifolds  (Theorem~\ref{theorem:c-finitely-compact}). Following the same line of argument relying upon analoguous results in the symmetrized polydisc we present a result on the existence of peak functions in symmetric powers (Theorem~\ref{theorem:peak-function}).

In Section~\ref{section:planar-domains} we concentrate on properties of symmetric products of planar domains in $\mathbb C$. We show the linear convexity of such domains (Proposition~\ref{proposition:linearly-convex}), we present a Riemann-type mapping theorem for them (Theorem~\ref{theorem:riemann-like}) and then we discuss to which extent the Kobayashi hyperbolicity (completeness) is preserved under taking the symmetric power -- a complete description of Kobayashi hyperbolicity (completeness) in that class is given in Theorem~\ref{theorem:kobayashi-complete}. Finally, we present a result on preserving the Carath\'eodory hyperbolicity under taking the symmetric powers of planar domains (Proposition~\ref{proposition:caratheodory-hyperbolic}).

\section{General case}\label{section:general-case}
In this Section we present results for a general class of symmetric powers of manifolds.

\subsection{(Quasi) $c$-finite compactness}

Let us recall that any holomorphic mapping $f:X\to Y$ ($X$ and $Y$ are complex manifolds, not necessarily of the same dimension) induces a holomorphic mapping $F_f:X^n_{sym}\to Y^n_{sym}$ by the formula (a  typical element of $X^n_{sym}$ generated by $(z_1,\ldots,z_n)\in X^n$ is denoted by $\langle z_1,\ldots,z_n\rangle$)
\begin{equation}
F_f(\langle z_1,\ldots,z_n\rangle):=\langle f(z_1),\ldots,f(z_n)\rangle,\; z_j\in X,\; j=1,\ldots,n.
\end{equation}
In the case $Y=\mathbb D$ we have the holomorphicity of the mapping
\begin{equation}
X^n_{sym}\owns \langle z_1,\ldots, z_n\rangle\to \pi_n(f(z_1),\ldots,f(z_n)\rangle\in\mathbb G_n.
\end{equation}

We should also be aware of the fact that any holomorphic function $F:X^n_{sym}\to\mathbb D$ may be identified with a symmetric function
$\tilde F:X^n\to\mathbb D$ -- a function $\tilde F\in\mathcal O(X^n,\mathbb D)$ is called {\it symmetric} if $\tilde F(z_{\sigma(1)},\ldots,z_{\sigma(n)})=\tilde F(z_1,\ldots,z_n)$ for any permutation $\sigma$ of $\{1,\ldots,n\}$, $z_1,\ldots,z_n\in X$. The identification is given by the relation
\begin{equation}
F(\langle z_1,\ldots,z_n\rangle)=\tilde F(z_1,\ldots,z_n),\; z_j\in X,; j=1,\ldots,n.
\end{equation}
The space of symmetric holomorphic functions $X^n\to\mathbb D$ is denoted by $\mathcal O_s(X^n,\mathbb D)$.

The last observation lets us define {\it the Carath\'eodory pseudodistance} $c_{X^n_{sym}}$ as follows:
\begin{multline}
c_{X^n_{sym}}(\langle z_1,\ldots,z_n\rangle,\langle w_1,\ldots,w_n\rangle)=\\
\sup\left\{p(0,F(w_1,\ldots,w_n)):F\in\mathcal O_s(X^n,\mathbb D), F(z_1,\ldots,z_n)=0\right\},
\end{multline}
where $p$ is the Poincar\'e distance on $\mathbb D$.

If, on some complex structure $X$ (e. g. a complex manifold), we may well-define the Carath\'eodory pseudodistance we call $X$ {\it quasi $c$-finitely compact } if  for any sequence $(z_k)_k\subset X$ without the accummulation point we have $c_X(z_1,z_k)\to\infty$.  Recall that if $X$ is additionally {\it Carath\'eodory hyperbolic }, i. e. $c_X(w,z)>0$, $w,z\in X$, $w\neq z$ then $X$ is called {\it $c$-finitely compact}. As to the basic properties related to the Carath\'eodory pseudodositance (as well as to other holomorphically invariant functions) we refer the Reader to e. g. \cite{Kob 1998}, \cite{Jar-Pfl 2013}.
 
As we shall see below a natural property that is inherited by the symmetric power is the quasi $c$-finite compactness.



\begin{theorem}\label{theorem:c-finitely-compact} Let $X$ be a connected complex manifold. Then $X^n_{sym}$ is quasi $c$-finitely compact iff $X$ is quasi $c$-finitely compact.
\end{theorem}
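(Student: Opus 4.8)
The plan is to prove the two implications separately; the nontrivial direction is that quasi $c$-finite compactness of $X$ forces that of $X^n_{sym}$, and the key will be to import the corresponding property of the symmetrized polydisc $\GG_n$.

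I would first dispose of the easy implication. Assume $X^n_{sym}$ is quasi $c$-finitely compact and let $(z_k)_k\subset X$ have no accumulation point. Fix auxiliary points $b_2,\ldots,b_n\in X$ and consider the holomorphic map $X\ni z\mapsto\langle z,b_2,\ldots,b_n\rangle\in X^n_{sym}$. Since $(z_k)$ escapes every compact of $X$, the tuples $(z_k,b_2,\ldots,b_n)$ escape every compact of $X^n$, hence $\langle z_k,b_2,\ldots,b_n\rangle$ escapes in $X^n_{sym}$ (the quotient map is proper). Quasi $c$-finite compactness of $X^n_{sym}$ then gives $c_{X^n_{sym}}(\langle z_1,b_2,\ldots,b_n\rangle,\langle z_k,b_2,\ldots,b_n\rangle)\to\infty$. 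Conversely, for every symmetric $\tilde F\in\mathcal{O}_s(X^n,\DD)$ with $\tilde F(z_1,b_2,\ldots,b_n)=0$, the slice $\phi:=\tilde F(\cdot,b_2,\ldots,b_n)\in\OO(X,\DD)$ vanishes at $z_1$, so $p(0,\tilde F(z_k,b_2,\ldots,b_n))=p(0,\phi(z_k))\le c_X(z_1,z_k)$; taking the supremum over such $\tilde F$ gives $c_{X^n_{sym}}(\langle z_1,b_2,\ldots,b_n\rangle,\langle z_k,b_2,\ldots,b_n\rangle)\le c_X(z_1,z_k)$. Hence $c_X(z_1,z_k)\to\infty$.

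For the converse implication assume $X$ is quasi $c$-finitely compact. Let $\langle w^{(k)}\rangle=\langle w_1^{(k)},\ldots,w_n^{(k)}\rangle$ have no accumulation point in $X^n_{sym}$, and fix the base point $\langle a\rangle=\langle a_1,\ldots,a_n\rangle$. Since the quotient map $X^n\to X^n_{sym}$ is proper and surjective, every compact of $X^n_{sym}$ is contained in the image of some $K^n$ with $K\subset X$ compact; thus for each compact $K$ eventually some coordinate of $w^{(k)}$ leaves $K$. An exhaustion-and-pigeonhole argument then produces, after passing to a subsequence, a fixed index $j_0$ for which $(w_{j_0}^{(k)})_k$ has no accumulation point in $X$. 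By quasi $c$-finite compactness of $X$ and the triangle inequality, $c_X(a_j,w_{j_0}^{(k)})\to\infty$ for every $j=1,\ldots,n$. The crux is now to convert this single escaping coordinate into a symmetric function. For each $j$ choose $g_{j,k}\in\OO(X,\DD)$ with $g_{j,k}(a_j)=0$ and $|g_{j,k}(w_{j_0}^{(k)})|\to 1$, and set $g_k:=\prod_{j=1}^n g_{j,k}\in\OO(X,\DD)$. Then $g_k(a_i)=0$ for every $i$ (its $i$-th factor vanishes at $a_i$), while $|g_k(w_{j_0}^{(k)})|=\prod_j|g_{j,k}(w_{j_0}^{(k)})|\to 1$. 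Form the holomorphic map $G_k:X^n_{sym}\to\GG_n$ given by $G_k(\langle z_1,\ldots,z_n\rangle)=\pi_n(g_k(z_1),\ldots,g_k(z_n))$. Since all $g_k(a_i)=0$ we have $G_k(\langle a\rangle)=\pi_n(0,\ldots,0)=0$, the centre of $\GG_n$, whereas $G_k(\langle w^{(k)}\rangle)$ is the symmetrization of a tuple one of whose entries, $g_k(w_{j_0}^{(k)})$, tends to $\partial\DD$; as compact subsets of $\GG_n$ pull back under $\pi_n$ to sets all of whose roots lie in a fixed compact of $\DD$, the point $G_k(\langle w^{(k)}\rangle)$ eventually leaves every compact of $\GG_n$.

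To conclude I would invoke that $\GG_n$ is $c$-finitely compact (it is a bounded, hence Carath\'eodory hyperbolic, domain, and this is known from the literature, e.g. \cite{Edi-Zwo 2005}, \cite{Jar-Pfl 2013}). Therefore $c_{\GG_n}(0,G_k(\langle w^{(k)}\rangle))\to\infty$, and the distance-decreasing property of the holomorphic maps $G_k$ yields $c_{X^n_{sym}}(\langle a\rangle,\langle w^{(k)}\rangle)\ge c_{\GG_n}(0,G_k(\langle w^{(k)}\rangle))\to\infty$, as required. I expect the main obstacle to be exactly this last transfer: the naive symmetric candidates $\sum_j g_k(z_j)$ and $\prod_j g_k(z_j)$ both fail (cancellation in the first case, vanishing of uncontrolled coordinates in the second), so one genuinely needs the finer Carath\'eodory functions of $\GG_n$, i.e. the $c$-finite compactness of the symmetrized polydisc, to finish. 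The product construction producing a single $g_k$ that vanishes at all base coordinates while remaining large at the escaping one is the other step requiring care.
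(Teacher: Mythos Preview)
Your proof is correct and follows the same overall strategy as the paper: pass to a subsequence so that one coordinate escapes, push everything into $\GG_n$ via a holomorphic map $X\to\DD$, and invoke the $c$-finite compactness of $\GG_n$ (the paper cites \cite{Nik-Pfl-Zwo 2008} for this). The one genuine difference is your product construction. You manufacture $g_k=\prod_j g_{j,k}$ so that $g_k$ vanishes at \emph{every} base coordinate $a_i$, forcing $G_k(\langle a\rangle)=\pi_n(0,\ldots,0)=0$ to be a fixed point; the paper instead uses a single $f_k$ with $f_k(z_1)=0$ only, so the images $\pi_n(f_k(z_1),\ldots,f_k(z_n))$ move with $k$. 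The paper then asserts that this set is relatively compact in $\GG_n$, which is true because $p(0,f_k(z_j))=p(f_k(z_1),f_k(z_j))\le c_X(z_1,z_j)<\infty$ by the contraction property, though this step is not spelled out there. Your product trick trades a slightly more elaborate $g_k$ for a cleaner endgame (fixed base point, no relative-compactness check); the paper's version is more economical in the construction but leans on the unrecorded Schwarz--Pick bound. Either route is fine.
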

\begin{proof} Assume that $X$ is quasi $c$-finitely compact.
Fix $\langle z_1,\ldots,z_n\rangle\in X^n_{sym}$. Let $\left(\langle w_1^k,\ldots,w_n^k\rangle\right)_k\subset X^n_{sym}$ be a sequence without an accummulation point. Then withot loss of generality we may assume that $(w_1^k)_k$ has no accummulation point in $X$. Let $f_k\in\mathcal O(X,\mathbb D)$ be such that $f_k(z_1)=0$ and $f_k(w_1^k)\to 1$. Then the contractivity of the Carath\'eodory pseudodistance gives the following
\begin{multline}
c_{X^n_{sym}}(\langle z_1,\ldots,z_n\rangle,\langle w_1^k,\ldots,w_n^k\rangle)\geq \\
c_{\mathbb G_n}(\pi_n(f_k(z_1),\ldots,f_k(z_n)),\pi_n(f_k(w_1^k),\ldots,f_k(w_n^k)).
\end{multline}
Since $\pi_n(f_k(w_1^k),\ldots,f_k(w_n^k))\to\partial \mathbb G_n$ and the set $\{\pi_n(f_k(z_1),\ldots,f_k(z_n)):k=1,2,\ldots\}$ is relatively compact in $\mathbb G_n$, the $c$-finite compactness of $\mathbb G_n$ (see \cite{Nik-Pfl-Zwo 2008}) gives the convergence of the above expression to infinity which finishes the proof.

Assume now that $X^n_{sym}$ is quasi $c$-finitely compact. Fix $\langle z_1,\ldots,z_n\rangle\in X^n_{sym}$. Let $(w_1^k)_k\subset X$ be a sequence without accumulation point. Then the sequence $\left(\langle w_1^k,z_2,\ldots,z_n\rangle\right)_k$ has no accumulation point, either. Then the inequalities
\begin{equation}
c_X(z_1,w_1^k)\geq c_{X^n_{sym}}(\langle z_1,\ldots,z_n\rangle,\langle w_1^k,z_2,\ldots,z_n\rangle)\to \infty
\end{equation}
show the quasi $c$-finite compactness of $X$.
\end{proof}

\begin{remark} In the case when $X$ is a bounded domain in $\mathbb C$ the above theorem is a reformulation of Theorem 4.1 in \cite{Kos-Zwo 2013} (applied to the proper holomorphic mapping $(\pi_n)_{|D^n}:D^n\to S_n(D)$). We also should be aware of the fact that the idea of the proof of the above theorem is exactly the same as that of Theorem 4.1 in \cite{Kos-Zwo 2013}.
\end{remark}
 
\begin{remark} In Theorem 1.4 in \cite{Bha-Bis-Div-Jan 2018} a result on Kobayashi completeness of symmetric powers of some Riemann surfaces is formulated. The proof relies on the existence of some peak functions together with the application of Result 3.6  from \cite{Bha-Bis-Div-Jan 2018}, in which the fact of $c$-finite compactness is claimed under assumption of the existence of some peak functions. It is however not explained us how the necessary fact of the Carath\'eodory hyperbolicity is obtained only with the help of the existence of peak functions (in the case studied in the reasoning from \cite{Kob 1998}, to which the paper \cite{Bha-Bis-Div-Jan 2018} appeals, the hyperbolicity is trivially satisfied). 

\end{remark}

\begin{remark} If $d$ denotes a family of holomorphically invariant functions (for instance the Carath\'eodory ($c$) or Kobayashi ($k$) pseudodistance) then $d$-hyperbolicity of a complex manifold $X^n_{sym}$ implies the $d$-hyperbolicity of $X$. Actually, fix $w_1,z_1\in X$, $w_1\neq z_1$. Choose $w_2\in X$. Then $\langle w_1,w_2,\ldots,w_2\rangle\neq \langle z_1,w_2,\ldots,w_2\rangle$.  Consequently,
\begin{equation}
c_X(w_1,z_1)\geq c_{X^n_{sym}}(\langle w_1,w_2,\ldots,w_2\rangle,\langle z_1,w_2,\ldots,w_2\rangle)>0.
\end{equation} 
As to the implication:

$X$ is $d$-hyperbolic $\implies$ $X^n_{sym}$ is $d$-hyperbolic

the observation in the next remark shows that it is not true in general.
\end{remark}


\begin{remark} It would be tempting to formulate a similar equivalence as in Theorem~\ref{theorem:c-finitely-compact} for the notion of the Kobayashi quasi completeness. However, 
the example  $\mathbb C\setminus\{0,1\}$, shows that the Kobayashi completeness of $X$ does not guarantee  any reasonable property of the Kobayashi pseudodistance of $X^n_{sym}$.

In fact, put $D:=\mathbb C\setminus\{0,1\}$, $n=2$. Then 
\begin{equation}
S_2(D)=\mathbb C^2\setminus(\mathbb C\times\{0\}\cup\{(\lambda+1,\lambda):\lambda\in\mathbb C\}). 
\end{equation}
The last is the space $\mathbb C^2$ with two complex lines intersected which is affinely isomorphic with $\mathbb C_*^2$ for which the Kobayashi pseudodistance vanishes. In the sequel we shall present a complete description of the Kobayashi hyperbolicity, Kobayashi completeness, Carath\'eodory hyperbolicity and $c$-finite compactness in the class of symmetric products of planar domains.
\end{remark}

\subsection{Peak functions}

In the paper \cite{Bha-Bis-Div-Jan 2018} the proof of the Kobayashi completeness (Theorem 1.4) was conducted with the help of the existence of peak functions (that was done for some Riemann surfaces).  We generalize the result and simplify the proof below. We also see that we may reduce the proof of the existence of peak functions in symmetric powers to the existence of some of peak functions in the original complex manifold.

For a domain $Y$ in a complex manifold $X$ and $K\subset\overline{Y}$ we define $A(Y,K):=\mathcal O(Y)\cap\mathcal C(Y\cup K)$. We call a point $z\in K$ an {\it $A(Y,K)$ peak point} if there is an $f\in A(Y,K)$ such that $|f|<1$ on $Y$ and $f(z)=1$. We call $f$ {\it the $A(Y,K)$ peak function at $z$}.

\begin{theorem}\label{theorem:peak-function} Let $Y$ be a domain in a complex manifold $X$. Assume that $z_1\in\partial Y$ is the $A(Y,\{z_1,\ldots,z_n\})$ peak point, where $z_2,\ldots,z_n\in\overline{Y}$. Then the point $\langle z_1,\ldots,z_n\rangle$ is an $A(Y^n_{sym},\{\langle z_1,\ldots,z_n\rangle\})$ peak point.
\end{theorem}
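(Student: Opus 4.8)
The plan is to manufacture the desired symmetric peak function from the given $f$ by transporting the problem to the symmetrized polydisc $\GG_n$ and there producing an explicit peak function at the relevant boundary point. Write $a_j:=f(z_j)$; since $|f|<1$ on $Y$ and $f$ is continuous up to $z_1,\dots,z_n$, we have $a_1=1$ and $a_j\in\overline{\DD}$ for every $j$. As recorded in the discussion of induced mappings above, the map
\[
\Phi\colon Y^n_{sym}\ni\langle w_1,\dots,w_n\rangle\longmapsto \pi_n(f(w_1),\dots,f(w_n))\in\GG_n
\]
is holomorphic (it lands in $\GG_n$ because $f(w_j)\in\DD$ for $w_j\in Y$), and by continuity of $f$ at each $z_j$ it extends continuously to $\langle z_1,\dots,z_n\rangle$, sending it to $\pi_n(1,a_2,\dots,a_n)\in\partial\GG_n$. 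Thus it suffices to exhibit $G\in A(\GG_n,\{\pi_n(1,a_2,\dots,a_n)\})$ peaking at that point, for then $G\circ\Phi$ is the required peak function at $\langle z_1,\dots,z_n\rangle$.

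I would then construct $G$ by hand. Writing a point of $\GG_n$ as $\pi_n(\lambda_1,\dots,\lambda_n)$ with $\lambda_j\in\DD$, consider
\[
\rho:=\prod_{j=1}^n(1-\lambda_j),
\]
a symmetric polynomial in $\lambda_1,\dots,\lambda_n$, hence a well-defined nonvanishing holomorphic function on $\GG_n$. Each factor has $\re(1-\lambda_j)>0$, so $\Log(1-\lambda_j)$ is holomorphic and $\sum_j\Log(1-\lambda_j)$ descends to a single-valued holomorphic branch $\ell$ of $\Log\rho$ on $\GG_n$. Put
\[
G:=\frac{e^{-\ell/n}-1}{e^{-\ell/n}+1}.
\]
Since $\imm(-\ell/n)=-\tfrac1n\sum_j\arg(1-\lambda_j)\in(-\pi/2,\pi/2)$, the value $e^{-\ell/n}$ lies in the right half-plane, and the Cayley transform above maps it into $\DD$; hence $|G|<1$ on $\GG_n$.

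The decisive point is the boundary behaviour of $G$ at $\pi_n(1,a_2,\dots,a_n)$. There $\re(-\ell/n)=-\tfrac1n\sum_j\log|1-\lambda_j|\to+\infty$: the factor with $\lambda_j\to1$ forces $-\log|1-\lambda_j|\to+\infty$, while every remaining summand stays bounded below (indeed $-\log|1-\lambda_j|>-\log2$). Consequently $|e^{-\ell/n}|\to\infty$, and the elementary bound
\[
|G-1|=\frac{2}{|e^{-\ell/n}+1|}\le\frac{2}{|e^{-\ell/n}|-1}\longrightarrow 0
\]
shows that $G$ extends continuously to the point with value $1$. Pulling back by $\Phi$, whose continuity up to $\langle z_1,\dots,z_n\rangle$ was noted above, then finishes the proof.

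I expect the main obstacle to be precisely the case in which several of the values $a_j=f(z_j)$ equal $1$ — that is, when $1$ occurs among the roots with multiplicity greater than one. Averaging-type peak functions of $\GG_n$ (the Costara \emph{magic} functions, of the form $\tfrac{\sum_j\lambda_j/(1-\omega\lambda_j)}{\sum_j1/(1-\omega\lambda_j)}$) degenerate to $0/0$ at such points and fail to peak there, because the large weights attached to the coordinates tending to $1$ can cancel — their real parts may remain bounded along tangential approaches. The reason for using the multiplicative quantity $\rho=\prod_j(1-\lambda_j)$ rather than a sum is exactly that $\log|\rho|$ is additive with each summand bounded below, so $\re(-\ell)$ genuinely tends to $+\infty$ with no possible cancellation, irrespective of the multiplicity of $1$ or of the path of approach. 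The remaining verifications — that $\ell$ is globally single-valued on $\GG_n$, that $\Phi$ is holomorphic and extends continuously, and that $G\circ\Phi$ is symmetric and descends to $Y^n_{sym}$ — are routine.
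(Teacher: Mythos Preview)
Your proof is correct and follows the same overall strategy as the paper: push the problem forward to $\GG_n$ via $\Phi(\langle w_1,\dots,w_n\rangle)=\pi_n(f(w_1),\dots,f(w_n))$ and compose with a peak function for $\GG_n$ at $\pi_n(1,a_2,\dots,a_n)$. The difference is that the paper simply invokes an existing result (Theorem~2.1 in \cite{Kos-Zwo 2013}) guaranteeing a holomorphic peak function for $\GG_n$ at every boundary point, whereas you construct one explicitly. Your construction via $\ell=\sum_j\Log(1-\lambda_j)$ and the Cayley transform is sound: $\ell$ is symmetric and hence descends to a holomorphic function on $\GG_n$, the imaginary part bound keeps $e^{-\ell/n}$ in the right half-plane so $|G|<1$, and the key observation that $\re(-\ell)=-\log|\rho|\to+\infty$ with each summand bounded below forces $G\to 1$ regardless of the multiplicity of the root $1$ among the $a_j$.

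What your approach buys is self-containment --- the reader need not consult \cite{Kos-Zwo 2013} --- and an explicit peak function. What the paper's approach buys is brevity and a slightly stronger auxiliary fact (the cited $F$ is holomorphic in a full neighbourhood of the boundary point, not merely continuous there), though that extra regularity is not needed for the theorem as stated. Your closing remarks on why averaging-type functions fail at multiple roots are apt and explain well why the multiplicative $\rho$ is the right object.
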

\begin{proof} Let $f$ be an $A(Y,\{z_1,\ldots,z_n\})$ peak function at $z_1$.
Let $F\in\mathcal O(\mathbb G_n\cup U)$, where $U$ is a neighborhood of $\pi_n(f(z_1),\ldots,f(z_n))\in\partial G_n$ be such that 
\begin{equation}
F(\pi_n(f(z_1),\ldots,f(z_n)))=1 \text{ and $|F|<1$ on $\mathbb G_n$}
\end{equation}
(see Theorem 2.1 in \cite{Kos-Zwo 2013}). The function \begin{equation}
Y^n_{sym}\cup\{\langle z_1,\ldots,z_n\rangle\}\owns \langle w_1,\ldots,w_n\rangle\to F(\pi_n(f(w_1),\ldots,f(w_n)))
\end{equation}
is the desired peaking function. 
\end{proof}

\begin{remark}
The assumption in Theorem~\ref{theorem:peak-function} is a weaker one than that in the proof of Theorem 1.4 of \cite{Bha-Bis-Div-Jan 2018}.
\end{remark}

\section{Remarks on symmetric products of planar domains}\label{section:planar-domains}
In this section we present properties of symmetric products of planar domains.

\subsection{General properties}

Recall that if $D$ is a domain in $\mathbb C$ then we have a nice representation of $D^n_{sym}$ as a domain in $\mathbb C$. We work therefore on this representation, i. e. the domain $S_n(D)$. First we collect some facts concerning $S_n(D)$.
Below we list some known or straightforward properties of $S_n(D)$.

\begin{remark} 
\begin{itemize}
\item $S_n(D)$ is a domain in $\mathbb C^n$ and $(\pi_n)_{|D^n}:D^n\to S_n(D)$ is proper onto the image,\\
\item if $\Sigma_n:=\{z\in D^n: z_j= z_k \text{ for some $j\neq k$}\}$ then $\pi_n:D^n\setminus \Sigma_n\to  S_n(D)\setminus\pi_n(\Sigma_n)$ is a holomorphic covering,\\
\item $S_n(D)$ is bounded iff $D$ is bounded,\\
\item $\overline{S_n(D)}=\pi_n(\overline{D}^n)$, $\partial S_n(D)=\pi_n(\partial D\times\overline{D}^{n-1})$,\\
\item if $D$ is additionally bounded then the mapping $(\pi_n)_{D^n}$ maps $A(D^n)$ peak points ($A(\Omega):=\mathcal O(\Omega)\cap\mathcal C(\overline{\Omega})$) onto $A(S_n(D))$ peak points. In particular, the Shilov boundary $\partial_S(S_n(D))$ equals $S_n(\partial_S(D))$ (see Theorem 3.1 in \cite{Kos-Zwo 2013}).
\end{itemize} 
\end{remark}

Recall that a domain
$\Omega\subset\mathbb C^n$ is called {\it linearly convex} if for any $w\in\mathbb C^n\setminus \Omega$ we may find an affine hyperplane $H$ passing through $w$ and disjoint from $\Omega$. Following step by step the idea from \cite{Nik-Pfl-Zwo 2008} we get the linear convexity of $S_n(D)$.

\begin{proposition}\label{proposition:linearly-convex} Let $D$ be a domain in $\mathbb C$, $n\geq 2$. Let $w=\pi_n(\mu)\in\mathbb C^n\setminus S_n(D)$ be such that $\mu_1\in\mathbb C\setminus D$. Then the affine hyperplane 
\begin{multline}
H(w,\mu_1):=\\
\{(\mu_1+ z_1,\mu_1z_1+z_2,\ldots,\mu_1 z_{n-2}+z_{n-1},\mu_1 z_{n-1}):(z_1,\ldots,z_{n-1})\in\mathbb C^{n-1}\}
\end{multline}
passes through $w$ and is disjoint from $S_n(D)$. In particular, $S_n(D)$ is linearly convex.
\end{proposition}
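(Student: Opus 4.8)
The plan is to recognize the hyperplane $H(w,\mu_1)$ as the locus of those points of $\mathbb{C}^n$ whose associated monic polynomial vanishes at $\mu_1$. For $x=(x_1,\ldots,x_n)\in\mathbb{C}^n$ I would write $p_x(\lambda):=\lambda^n+\sum_{j=1}^n(-1)^j x_j\lambda^{n-j}$, so that, by the defining relation for $\pi_n$, a point $x$ lies in $S_n(D)$ exactly when all $n$ roots of $p_x$ belong to $D$, and $x=\pi_n(\lambda_1,\ldots,\lambda_n)$ means $p_x(\lambda)=\prod_{j=1}^n(\lambda-\lambda_j)$. With this dictionary the whole statement becomes a statement about roots of polynomials.

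The key step is to identify the parametrization defining $H(w,\mu_1)$. I claim that the map $(z_1,\ldots,z_{n-1})\mapsto x$ in that definition sends $z$ to the coefficient vector of the product $(\lambda-\mu_1)\,Q_z(\lambda)$, where $Q_z(\lambda):=\lambda^{n-1}+\sum_{j=1}^{n-1}(-1)^j z_j\lambda^{n-1-j}$ is the generic monic polynomial of degree $n-1$. This follows from a direct comparison of the coefficients of like powers of $\lambda$: expanding $(\lambda-\mu_1)Q_z(\lambda)$ one reads off $x_1=\mu_1+z_1$, $x_k=\mu_1 z_{k-1}+z_k$ for $2\le k\le n-1$, and $x_n=\mu_1 z_{n-1}$, which is precisely the list of coordinates in the definition of $H(w,\mu_1)$. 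Since, conversely, every monic degree-$n$ polynomial divisible by $\lambda-\mu_1$ is obtained this way by polynomial division, I conclude
\[
H(w,\mu_1)=\{x\in\mathbb{C}^n:\ p_x(\mu_1)=0\}.
\]
(As a check on dimensions, $\{x:p_x(\mu_1)=0\}$ is a genuine affine hyperplane because $p_x(\mu_1)$ is affine-linear in $x$ with $x_n$-coefficient $(-1)^n\neq0$, matching the dimension $n-1$ of $H(w,\mu_1)$.)

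Granting this identification, both assertions are immediate. The hyperplane passes through $w$: since $w=\pi_n(\mu)$ we have $p_w(\lambda)=\prod_{j=1}^n(\lambda-\mu_j)$, and $\mu_1$ is one of these roots, so $p_w(\mu_1)=0$, i.e.\ $w\in H(w,\mu_1)$. The hyperplane is disjoint from $S_n(D)$: if some $x$ lay in $S_n(D)\cap H(w,\mu_1)$, then on one hand all roots of $p_x$ would lie in $D$, while on the other hand $p_x(\mu_1)=0$ would exhibit $\mu_1$ as a root of $p_x$; this is impossible since $\mu_1\in\mathbb{C}\setminus D$.

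Finally, for the linear convexity of $S_n(D)$ I would show that an arbitrary $w\in\mathbb{C}^n\setminus S_n(D)$ can be put in the required form. By the fundamental theorem of algebra $p_w$ splits completely, so $w=\pi_n(\mu)$ for some $\mu\in\mathbb{C}^n$; because $w\notin S_n(D)$ not all of its roots lie in $D$, and after relabelling (which does not change $w$, since $\pi_n$ is symmetric) I may assume $\mu_1\in\mathbb{C}\setminus D$. The first part then supplies an affine hyperplane through $w$ avoiding $S_n(D)$, which is exactly the definition of linear convexity. I do not expect a serious obstacle: the only real computation is the coefficient comparison in the second paragraph, which is routine bookkeeping, and the one point deserving care is the verification that the linear equation $p_x(\mu_1)=0$ genuinely cuts out a hyperplane (ensured by the nonvanishing $x_n$-coefficient) so that $H(w,\mu_1)$ has the correct dimension.
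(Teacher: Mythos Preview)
Your proof is correct and follows essentially the same idea as the paper's: both recognize that $H(w,\mu_1)$ is exactly the set of points $x$ whose associated monic polynomial has $\mu_1$ as a root, i.e.\ $H(w,\mu_1)=\{\pi_n(\mu_1,\lambda'):\lambda'\in\mathbb C^{n-1}\}$, from which disjointness from $S_n(D)$ and membership of $w$ are immediate. The only difference is packaging: the paper reaches this by substituting $z=\pi_{n-1}(\lambda')$ and invoking surjectivity of $\pi_{n-1}$, whereas you phrase it via the linear equation $p_x(\mu_1)=0$ and polynomial division; your version has the minor advantage of making explicit why $H(w,\mu_1)$ is a genuine hyperplane and of spelling out the passage from the specific $w$ to the general linear convexity statement.
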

 \begin{proof} Note that
\begin{multline}
H(w,\mu_1)=\\
\{(\mu_1+\pi_{n-1,1}(\lambda^{\prime}),\mu_1\pi_{n-1,1}(\lambda^{\prime})+\pi_{n-1,2}(\lambda^{\prime}),\ldots,\mu_1\pi_{n-1,n-1}(\lambda^{\prime})):\lambda^{\prime}\in\mathbb C^{n-1}\},
\end{multline}
 which follows from the surjectivity of the mapping $\pi_{n-1}:\mathbb C^{n-1}\to\mathbb C^{n-1}$. It follows that $H(w,\mu_1)$ is disjoint from $S_n(D)$. Substituting $\lambda^{\prime}=(\mu_2,\ldots,\mu_n)$ we see that $w=\pi_n(\mu_1,\ldots,\mu_n)\in H(w,\mu_1)$, which finishes the proof.
\end{proof}

\begin{remark}\label{remark-intersection} Let us draw our attention to the following property. If $\mu_1,\ldots,\mu_k\in\mathbb C$ ($1\leq k\leq n$) then the set
\begin{multline}
H(\mu_1,\ldots,\mu_k):=\\
\left\{\pi_n(\mu_1,\ldots,\mu_k,\lambda_1,\ldots,\lambda_{n-k}):\lambda_j\in\mathbb C,\;j=1,\ldots,n-k\right\}
\end{multline}
is an $(n-k)$-dimensional affine space (in the proof of the previous proposition we considered the case $k=1$). Actually, first note that the mapping $\psi:=\pi_n(\mu_1,\ldots,\mu_k,\cdot):\mathbb C^{n-k}\to\mathbb C^n$ is proper. Additionally, the form of $\pi_n$ easily implies that $\psi$ is an affine mapping of variables $\pi_{n-k}(\lambda_1,\ldots,\lambda_{n-k})$ and $\pi_{n-k}:\mathbb C^{n-k}\to\mathbb C^{n-k}$ is also onto. All these facts give the desired property of $H(\mu_1,\ldots,\mu_{k})$. 
\end{remark}

We show how some properties of $D$ induce the same ones of $S_n(D)$ (compare Theorem~\ref{theorem:c-finitely-compact}). The first notion that we discuss is the hyperconvexity.

\begin{proposition} Let $D$ be a domain in $\mathbb C$, $n\geq 2$. Then $S_n(D)$ is hyperconvex iff $D$ is hyperconvex.
\end{proposition}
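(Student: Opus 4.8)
The plan is to use the characterisation of hyperconvexity by the existence of a negative continuous plurisubharmonic exhaustion function and to transport such functions along the proper holomorphic mapping $(\pi_n)_{|D^n}:D^n\to S_n(D)$. As a preliminary step I would record that $D$ is hyperconvex iff $D^n$ is. If $u:D\to(-\infty,0)$ is a continuous psh exhaustion, then $(\lambda_1,\ldots,\lambda_n)\mapsto\max_j u(\lambda_j)$ is a continuous psh exhaustion of $D^n$ which is moreover symmetric (its sublevel set at height $c$ is the $n$-fold product of the relatively compact set $\{u<c\}$). Conversely, restricting a continuous psh exhaustion $v$ of $D^n$ to a slice $\lambda\mapsto v(\lambda,a_2,\ldots,a_n)$, with $a_2,\ldots,a_n\in D$ fixed, gives one for $D$; here one uses that a point whose first coordinate tends to $\partial D$ leaves every compact subset of $D^n$, so that $v$ tends to $0$ along it.

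For the implication that $D$ hyperconvex implies $S_n(D)$ hyperconvex, I would take the symmetric exhaustion $v$ of $D^n$ from the previous step. Being symmetric, and hence constant on the fibres of $\pi_n$, it descends to a function $\hat v$ on $S_n(D)$ characterised by $\hat v\circ\pi_n=v$. Since $\pi_n$ is a finite proper quotient map, $\hat v$ is continuous; and since $v<0$ has relatively compact sublevel sets, $\hat v$ is negative and $\{\hat v<c\}=\pi_n(\{v<c\})$ is relatively compact in $S_n(D)$, so $\hat v$ is a negative continuous exhaustion. It remains to check that $\hat v$ is psh. Away from the branch locus $\pi_n(\Sigma_n)$ the map $\pi_n$ is a holomorphic covering, so there $\hat v$ is locally $v$ composed with a local inverse and hence psh; the only real point is to cross $\pi_n(\Sigma_n)$.

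This descent across the branch locus is the main obstacle. I would handle it by the removable singularity theorem for plurisubharmonic functions: $\pi_n(\Sigma_n)$ is a proper analytic subset (the discriminant) of the manifold $S_n(D)\subset\mathbb C^n$, and $\hat v$ is continuous, in particular locally bounded above, and psh on $S_n(D)\setminus\pi_n(\Sigma_n)$. Hence its canonical psh extension across $\pi_n(\Sigma_n)$ coincides with $\hat v$ by continuity, so $\hat v\in\psh(S_n(D))$. This provides the required exhaustion and yields hyperconvexity of $S_n(D)$.

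Finally, for the reverse implication I would pull an exhaustion back rather than push it down. If $w$ is a negative continuous psh exhaustion of $S_n(D)$, then $w\circ\pi_n$ is psh on $D^n$ as a composition of a psh function with a holomorphic map; it is negative, continuous, and its sublevel sets $\pi_n^{-1}(\{w<c\})$ are relatively compact because $\pi_n$ is proper. Thus $D^n$ is hyperconvex, and by the preliminary equivalence so is $D$, which completes the argument.
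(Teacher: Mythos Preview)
Your argument is correct and follows essentially the same route as the paper: the pushed-down function $\hat v$ you construct coincides with the paper's $v(z)=\max\{u(w_j):\pi_n(w)=z\}$, and your reverse implication via pulling back and slicing is exactly the paper's restriction $u(\lambda)=v(\pi_n(\lambda_1,\ldots,\lambda_{n-1},\lambda))$, just factored through $D^n$. Your explicit treatment of plurisubharmonicity across the branch locus via the removable singularity theorem is a welcome addition---the paper asserts this step without justification.
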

\begin{proof}  Let $D$ be hyperconvex and let $u:D\to(-\infty,0)$ be a negative subharmonic exhaustion function. Define
\begin{equation}
v(z):=\max\{u(w_j):\pi_n(w_1,\ldots,w_n)=z,\; j=1,\ldots,n\},\; z\in S_n(D).
\end{equation}
The properness of $(\pi_n)_{D^n}$ onto the image and the geometry of $S_n(D)$ imply that $v$ is a negative plurisubharmonic exhaustion function of $S_n(D)$.

To prove the opposite implication fix some $\lambda_1,\ldots,\lambda_{n-1}\in D$ and let $v$ be the negative plurisubharmonic exhaustion function on $S_n(D)$. Let $u(\cdot):=v(\pi_n(\lambda_1,\ldots,\lambda_{n-1},\cdot))$ be defined on $D$. Then $u$ is a negative exhaustion subharmonic function on $D$.
\end{proof}

\subsection{Riemann-type mapping theorem}
For a domain $\Omega\subset\mathbb C^n$ we define {\it the Lempert function} as follows
\begin{equation}
l_{\Omega}(w,z):=\inf\{p(0,\sigma):\exists f\in\mathcal O(\mathbb D,\Omega) \text{ such that } f(0)=w, f(\sigma)=z\}.
\end{equation}
Recall that the Lempert Theorem (see e. g. \cite{Lem 1981}, \cite{Jar-Pfl 2013}) states that if $\Omega$ is convex then $l_{\Omega}\equiv c_{\Omega}$.

In the next result we show a Riemann-type mapping theorem for symmetric powers of planar domains.

\begin{theorem}\label{theorem:riemann-like}
Let $D$ be a bounded, hyperconvex domain in $\mathbb C$, $n\geq 2$. Assume that $c_{S_n(D)}\equiv l_{S_n(D)}$. 
Then $D$ is biholomorphic to $\mathbb D$ and $n=2$.
\end{theorem}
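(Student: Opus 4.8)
The plan is to prove the stated (forward) implication by contraposition, viewing the result as a rigidity theorem: among symmetric products of bounded hyperconvex planar domains, only $S_2(\DD)=\GG_2$ obeys Lempert's identity $c\equiv l$. Since $D$ is a bounded planar domain, the Riemann mapping theorem gives $D\cong\DD$ precisely when $D$ is simply connected; so I must show that if $D$ is multiply connected, or if $n\ge 3$, then $c_{S_n(D)}\not\equiv l_{S_n(D)}$. Two general facts are used throughout: first, $c\le k\le l$ always, so $c\equiv l$ is the same as $c\equiv k\equiv l$; second, $D$ bounded and hyperconvex makes $S_n(D)$ bounded and hyperconvex (by the hyperconvexity proposition above), hence taut, so complex $c$-geodesics exist and, under $c\equiv l$, each such geodesic disc is a holomorphic retract of $S_n(D)$.

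Step 1 (forcing $n=2$). I handle this after simple connectivity has been established, because then $S_n(D)\cong S_n(\DD)=\GG_n$. Here I invoke the known status of Lempert's theorem for the symmetrized polydisc: $c_{\GG_2}\equiv l_{\GG_2}$ (so the case $n=2$, $D\cong\DD$ is genuinely admissible), whereas $c_{\GG_n}\not\equiv l_{\GG_n}$ for every $n\ge 3$. Consequently $c\equiv l$ on $S_n(D)\cong\GG_n$ is impossible unless $n=2$.

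Step 2 (forcing $D\cong\DD$; the crux). Suppose $D$ is multiply connected. The mechanism I would exploit is that on a multiply connected bounded planar domain the Carath\'eodory and Kobayashi distances do not coincide (the annulus being the model example): $c_D\not\equiv k_D$. I would transfer this gap to $S_n(D)$ and contradict $c\equiv k$ there. The cleanest transfer would be a holomorphic retraction of $S_n(D)$ onto an embedded copy of $D$, since a holomorphic retract $Y\subset\Omega$ satisfies $c_Y=c_\Omega|_Y$ and $k_Y=k_\Omega|_Y$; then $c\equiv k$ on $S_n(D)$ would force $c_D\equiv k_D$, a contradiction. When $D\cong\DD$ such a retraction onto the diagonal $\delta(D)=\{\langle\la,\dots,\la\rangle\}$ does exist (compose the Riemann map, averaging in $\DD$, and its inverse), which is exactly why the disc case is special; the aim is to show no analogous structure survives for multiply connected $D$.

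Main obstacle. The difficulty is precisely this transfer. For non-convex, hence for multiply connected, $D$ the naive averaging retraction $\langle w_1,\dots,w_n\rangle\mapsto\delta\bigl((w_1+\dots+w_n)/n\bigr)$ leaves $D$, and holomorphic maps $S_n(D)\to D$ are scarce, so no cheap retraction is available. I would instead pass to the universal covering $p:\DD\to D$ and the induced map $F_p:\GG_n\to S_n(D)$ (a holomorphic covering off the diagonal loci), using it to bound $k_{S_n(D)}$ from below by data recording $\pi_1(D)$ while bounding $c_{S_n(D)}$ from above by symmetric bounded functions that cannot fully detect the winding around the holes; an explicit pair of points at which these two bounds are incompatible would give $c<l$. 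Turning this winding/deck-transformation heuristic into a provable strict inequality at concrete points is the step I expect to be genuinely hard, and it is where the specific extremal structure of $\GG_n$ and the geometry of $p$ must be combined.
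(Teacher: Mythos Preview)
Your Step~1 is fine and is exactly how the paper finishes: once $D\cong\DD$ is known, $S_n(D)\cong\GG_n$, and the cited results on the symmetrized polydisc force $n=2$.

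Step~2, however, is not a proof; it is a programme with the hard part left open, and you yourself flag the obstacle. The transfer $c_{S_n(D)}\equiv k_{S_n(D)}\Rightarrow c_D\equiv k_D$ would indeed be immediate from a holomorphic retraction $S_n(D)\to D$, but no such retraction is produced (and, as you note, the obvious candidate fails unless $D$ is convex). The alternative route through the universal covering $p:\DD\to D$ and $F_p:\GG_n\to S_n(D)$ gives only an \emph{upper} bound $k_{S_n(D)}\le k_{\GG_n}$ via push-forward; to contradict $c\equiv k$ you would need a \emph{lower} bound on $k_{S_n(D)}$ that sees $\pi_1(D)$, and nothing in your sketch supplies one. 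So Step~2 has a genuine gap.

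The paper avoids this entirely by a direct, constructive argument rather than contraposition. Fix distinct boundary points $\lambda_1^0,\dots,\lambda_{n-1}^0\in\partial(\operatorname{int}\overline D)$ and two interior points $\lambda_n^0,\mu_n^0\in D$. Approximate the $\lambda_j^0$ from inside and use $c\equiv l$ together with tautness (from hyperconvexity) to produce, for each $k$, a complex geodesic $f_k:\DD\to S_n(D)$ through $\pi_n(\lambda_1^k,\dots,\lambda_{n-1}^k,\lambda_n^0)$ and $\pi_n(\lambda_1^k,\dots,\lambda_{n-1}^k,\mu_n^0)$, with left inverse $F_k:S_n(D)\to\DD$. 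Passing to a limit and analysing the components of $\pi_n^{-1}\circ f$, the choice of $\lambda_j^0$ on $\partial(\operatorname{int}\overline D)$ forces $n-1$ of the components to be constant, so $f(\DD)$ lies in the affine line $\{\pi_n(\lambda_1^0,\dots,\lambda_{n-1}^0,\lambda):\lambda\in\CC\}$. This yields holomorphic maps $g:\DD\to D$ and $G:D\to\DD$ with $G\circ g=\id_{\DD}$ (by Schwarz), hence $D\cong\DD$. In short, the paper manufactures an embedded copy of $D$ inside $S_n(D)$ that \emph{is} the image of a limit geodesic, so the retract structure you were seeking is obtained a posteriori from $c\equiv l$ rather than assumed in advance.
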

\begin{proof}
Choose pairwise distinct points $\lambda_1^0,\ldots,\lambda_{n-1}^0\in\partial (\operatorname{int}\overline{D})$ (the fact that $D$ is bounded allows us to make such a choice)  and two distinct points $\lambda_n^0,\mu_n^0\in D$. Let us also choose sequences $D\owns \lambda_j^k\to_{k\to\infty} \lambda_j^0$, $j=1,\ldots,n-1$. Now the equality between the Lempert function and the Carath\'eodory distance and the tautness of $S_n(D)$ imply that there exist holomorphic mappings 
\begin{equation}
f_k:\mathbb D\to S_n(D),\; F_k:S_n(D)\to\mathbb D
\end{equation}
such that $F_k\circ f_k=\operatorname{id}_{\mathbb D}$ and $f_k(0)=\pi_n(\lambda_1^k,\ldots,\lambda_{n-1}^k,\lambda_n^0)$, $f_k(\sigma_k)=\pi_n(\lambda_1^k,\ldots,\lambda_{n-1}^k,\mu_n^0)$, where $\sigma_k\in(0,1)$. 

Define 
\begin{equation}
G_k(\lambda):=F_k(\pi_n(\lambda_1^k,\ldots,\lambda_{n-1}^k,\lambda)),\; \lambda\in D.
\end{equation}
Without loss of generality (taking if necessary a subsequence) we have the following convergences (we use here the boundedness of $D$): 
\begin{equation}
\sigma_k\to\sigma_0\in(0,1),\; f_k\to f \text{ and } G_k\to G \text{ locally uniformly},
\end{equation}
where $f:\mathbb D\to\overline{S_n(D)}$,
$G:D\to\mathbb D$, $f(0)=\pi_n(\lambda_1^0,\ldots,\lambda_n^0)$, $f(\sigma)=\pi_n(\lambda_1^0,\ldots,\lambda_{n-1}^0,\mu_n^0)$, $G(\lambda_n^0)=0$, $G(\mu_n^0)=\sigma$.

Define $\tilde g$ (respectively, $\tilde g_k$) to be the $n$ components of the multivalued function $\pi_n^{-1}\circ f$ (respectively, $\pi_n^{-1}\circ f_k$). We know that all the components of $\tilde g$ (respectively, $\tilde g_k$) have values in $\overline{D}$ (respectively, $D$). Additionally at the points $0$ and $\sigma$ all but one components of $\tilde g$ are from $\partial D$. Note that the values of $f$ at these two points are regular values for the proper holomorphic mapping $\pi_n$ and thus the functions $\pi_n^{-1}\circ f$ near these two points ($0$ and $\sigma$) may be chosen to be a holomorphic mapping.

The openness of holomorphic functions and the description of the closure of $\pi_n(D)$ together with the fact that $\lambda_j^0\in\partial (\operatorname{int}\overline{D})$, $j=1,\ldots,n-1$, imply that near these two points all but one components of $\tilde g$ are constant (and equal to $\lambda_1^0,\ldots,\lambda_{n-1}^0$) whereas the last one is from $D$. The fact that some nonempty open part of $f(\mathbb D)$ is lying in the complex line $L:=\{\pi_n(\lambda_1^0,\ldots,\lambda_{n-1}^0,\lambda):\lambda\in\mathbb C\}$ implies easily that $f(\mathbb D)\subset L$ and consequently all but one components of $\tilde g$ are constant (equal to $\lambda_j^0$, $j=1,\ldots,n-1$) and the last component  is a nonconstant holomorpic function $g:\mathbb D\to\operatorname{int}(\overline{D})$ with $g(0)=\lambda_n^0$ and $g(\sigma)=\mu_n^0$. We show below that we have even more; namely, $g(\mathbb D)\subset D$.  In fact, following the same line of argument we get more; the multivalued functions $\tilde g_k$ have the following property: there exists a sequence $r_k\to 1$, $0<r_k<1$, such that the multivalued functions $\tilde g_k$ are actually holomorphic functions when restricted to $r_k\mathbb D$ with values in $D$. Moreover, taking the last component of the multifunction $\tilde g_k$ (which we denote by $g_k$) we get that $g_k\to g$ locally uniformly on $\mathbb D$. Recall that $g_k:r_k\mathbb D\to D$ so the Hurwitz theorem implies that $g(\mathbb D)\subset D$.

Since $G\circ g(0)=0$, $G\circ g(\sigma)=\sigma$, the Schwarz Lemma implies that $G\circ g$ is the identity. Consequently, $D$ is biholomorphic to $\mathbb D$ (with biholomorphisms given by $g$ or $G$). The results on the symmetrized polydisc (see \cite{Cos 2004}, \cite{Agl-You 2004}, \cite{Nik-Pfl-Zwo 2007}) imply that $n=2$.     
\end{proof}

\begin{remark}
It would be interesting to see whether some analogue of the Lempert theorem or the rigidity of the group of automorphims holds for $(\mathbb B_m)_{sym}^n$, 
$m,n\geq 2$ (compare \cite{Agl-You 2004}, \cite{Cos 2004}, \cite{Edi-Zwo 2005}, \cite{Nik-Pfl-Zwo 2007}, \cite{Cha-Grow 2018}). It is also interesting to which extent we could relax assumptions in Theorem~\ref{theorem:riemann-like}. Recall that without some extra assumptions we cannot hope for the implication: 
\begin{equation}
l_{S_n(D)}\equiv c_{S_n(D)}\implies l_D\equiv c_D.
\end{equation}
Namely, in the example $D:=\mathbb C\setminus\{0,1\}$ we have the identities $l_{S_2(D)}\equiv c_{S_2(D)}\equiv 0$ whereas $c_D\equiv 0$ and $l_D(w,z)>0$, $w\neq z$.
\end{remark}

\subsection{Kobayashi hyperbolicity and completeness of symmetric products of planar domains}

Recall that {\it the Kobayashi (pseudo)distance $k_{\Omega}$} of a domain $\Omega\subset\mathbb C^n$ may be defined as the largest pseudodistance smaller than or equal to $l_{\Omega}$. The domain $\Omega$ is called {\it Kobayashi hyperbolic}  if $k_{\Omega}$ is a distance. If additionally, $(\Omega,k_{\Omega})$ is a complete metric space then $\Omega$ is called {\it Kobayashi complete}. Recall that the Kobayashi completeness of a Kobayashi hyperbolic domain is equivalent to {\it the $k$-finite compactness}, i. e. the fact that $k_{\Omega}(z,z^k)\to\infty$ for some (any) $z\in\Omega$ and any sequence $(z^k)_k\subset\Omega$  having no accummulation point (see e. g. \cite{Kob 1998}, \cite{Jar-Pfl 2013}).

We already know that representations of symmetric products of planar domains are linearly convex. It is worth mentioning that a bounded linearly convex domain $\Omega\subset\mathbb C^n$ is automatically Kobayashi complete. We present the proof below.

\begin{proposition}  Let $\Omega\subset\mathbb C^n$ be a bounded linearly convex domain. Then $\Omega$ is Kobayashi complete. 
\end{proposition}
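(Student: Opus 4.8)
The plan is to prove Kobayashi completeness by establishing $k$-finite compactness, which for a bounded domain (hence automatically Kobayashi hyperbolic) is equivalent to completeness by the criterion recalled just before the statement. So I would fix a base point $z^0\in\Omega$ and a sequence $(z^k)_k\subset\Omega$ without accumulation point in $\Omega$, and show $k_\Omega(z^0,z^k)\to\infty$. Since $\Omega$ is bounded, after passing to a subsequence I may assume $z^k\to w$ for some $w\in\partial\Omega$, and the task reduces to showing that the Kobayashi distance blows up as one approaches any boundary point.

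The key tool will be the linear convexity hypothesis: for the boundary point $w\in\partial\Omega$ (more precisely, approximating from outside, for points $\zeta\notin\Omega$ near $w$) there is a complex affine hyperplane $H=\{z:\ell(z)=0\}$ passing through $\zeta$ and disjoint from $\Omega$, where $\ell$ is a nonconstant affine function. Then $\Omega$ is contained in $\CC^n\setminus H=\{\ell\neq 0\}$, and the function $1/\ell$ is holomorphic and, by boundedness of $\Omega$, bounded on $\Omega$. The idea is that as $z^k\to w$, one can choose the separating hyperplane so that $\ell(z^k)\to 0$, which forces the holomorphic map $z\mapsto 1/\ell(z)$ to send $z^k$ off to infinity in $\CC$. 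Using the decreasing property of the Kobayashi distance under this holomorphic map (composed with a suitable automorphism of $\CC\setminus\{0\}$, or rather comparing with a punctured disc / annulus) one would like to conclude blow-up. The cleaner route is to exploit the Carath\'eodory distance instead: a bounded linearly convex domain admits, near each boundary point, separating affine functions yielding bounded holomorphic functions whose moduli approach their boundary value, giving $c$-finite compactness, and then use $k_\Omega\geq c_\Omega$ together with the general fact that $c$-finite compactness implies $k$-finite compactness for bounded domains.

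Concretely, I would first show that $\Omega$ is $c$-finitely compact. Fix $z^0$ and $z^k\to w\in\partial\Omega$. For each $k$ pick $\zeta^k\notin\overline\Omega$ with $\zeta^k\to w$ and a separating hyperplane $H_k=\{\ell_k=0\}$ through $\zeta^k$ disjoint from $\Omega$; normalize the affine functionals $\ell_k$ appropriately (e.g. so the associated bounded holomorphic functions $F_k:=\frac{a_k}{\ell_k}$, with $a_k$ chosen so $\|F_k\|_\Omega\le 1$ and $F_k(z^0)$ stays in a fixed compact subset of $\DD$) and arrange that $|F_k(z^k)|\to 1$ as $\ell_k(z^k)\to 0$. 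Then
\begin{equation}
c_\Omega(z^0,z^k)\geq p\bigl(F_k(z^0),F_k(z^k)\bigr)\to\infty,
\end{equation}
which is exactly $c$-finite compactness. Since $\Omega$ is bounded, $c_\Omega\leq k_\Omega$ gives $k_\Omega(z^0,z^k)\to\infty$, hence $k$-finite compactness; combined with Kobayashi hyperbolicity (immediate for bounded domains) this yields Kobayashi completeness.

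The main obstacle will be the careful choice and normalization of the separating functionals $\ell_k$ so that the resulting bounded holomorphic functions $F_k$ have a value at the base point $z^0$ staying away from the unit circle (so that the Poincar\'e distance genuinely blows up) while their value at $z^k$ tends to modulus one. This is where linear convexity must be used quantitatively rather than just qualitatively: one needs that the separating hyperplane through $\zeta^k$ can be taken with $\mathrm{dist}(z^0,H_k)$ bounded below and $\mathrm{dist}(z^k,H_k)\to 0$, i.e. that the uniform escape of the distance to the separating hyperplane is controlled. Boundedness of $\Omega$ is essential here both for the $1/\ell_k$ functions to be bounded and for extracting convergent subsequences; I would expect the verification that these normalizations can be made uniformly to be the only genuinely delicate point, the rest being the standard reduction to $c$-finite compactness and the distance comparison $c_\Omega\le k_\Omega$.
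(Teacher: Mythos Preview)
You have the right idea---use the separating hyperplane to reduce to a one-dimensional estimate---but you introduce an unnecessary complication that creates the very gap you flag at the end. Linear convexity, as defined, provides a hyperplane through any $w\in\CC^n\setminus\Omega$; since $\Omega$ is open this already includes every boundary point. So there is no need to approximate $w$ from the exterior by $\zeta^k$ and work with a \emph{sequence} of hyperplanes $H_k$: you may take a single $H=\{\ell=0\}$ through $w$ itself, disjoint from $\Omega$. Then the affine map $\ell$ sends $\Omega$ into the bounded planar domain $\ell(\Omega)\not\ni 0$, and $\ell(z^k)\to\ell(w)=0\in\partial\ell(\Omega)$. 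Since every bounded planar domain is Kobayashi complete, contractivity gives
\begin{equation}
k_\Omega(z^0,z^k)\geq k_{\ell(\Omega)}\bigl(\ell(z^0),\ell(z^k)\bigr)\to\infty,
\end{equation}
and all your uniform-normalization worries evaporate.

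This is precisely the paper's argument, phrased there as a projection along $H$ onto a complex line transverse to $H$. Your Carath\'eodory route could be completed in the same way (bounded planar domains are $c$-finitely compact), and would then yield the slightly stronger conclusion of $c$-finite compactness. Note, however, that with a hyperplane through $w\in\partial\Omega$ the specific function $1/\ell$ is \emph{unbounded} on $\Omega$, so it does not directly give a map into $\DD$; the correct move is to use $\ell$ as a holomorphic map into the bounded planar domain $\ell(\Omega)$ and appeal to one-dimensional completeness---which is exactly what the paper does for the Kobayashi distance.
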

\begin{proof}
Certainly $\Omega$ is Kobayashi hyperbolic.

Fix a boundary point $w\in\partial\Omega$. Let $H$ denote an affine hypersurface passing through $w$ and disjoint from $\Omega$ and let $l$ denote a complex line passing through $w$ orthogonal to $H$. The projection $p$ along $H$ onto $l$ maps $\Omega$ onto the bounded image in $l$ with the point $w$ lying in the boundary of $p(D)$. Let $(z^k)_k\subset D$ be such that $z^k\to w$. Then the contractivity of the Kobayashi pseudodistance gives
\begin{equation}
k_{\Omega}(z^1,z^k)\geq k_{p(\Omega)}(p(z^1),p(z^k)).
\end{equation}  
Since $p(\Omega)$ is a bounded planar domain it is Kobayashi complete. Therefore, the last expression tends to infinity which easily finishes the proof. 
\end{proof}

The above proposition allows us to conclude that a domain $S_n(D)$ is Kobayashi complete if $D\subset\mathbb C$ is bounded. In the unbounded case we should be more careful. Below we present a complete description of Kobayashi hyperbolicity and completeness of symmetric products of planar domains.
We start with the special case.

\begin{proposition}\label{proposition:complement-hyperplanes} Fix $n,N\geq 2$. Let $\mu_1,\ldots,\mu_N\in\mathbb C$ be pairwise different.

If $N\geq 2n$ then the domain $S_n(\mathbb C\setminus\{\mu_1,\ldots,\mu_N\})$ is Kobayashi complete.

If $N<2n$ then the domain $S_n(\mathbb C\setminus\{\mu_1,\ldots,\mu_N\}$ contains a non-constant holomorphic image of $\mathbb C$ and thus it is not Kobayashi hyperbolic.
\end{proposition}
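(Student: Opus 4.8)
The plan is to realize $S_n(D)$, for $D=\CC\setminus\{\mu_1,\dots,\mu_N\}$, as the complement of a hyperplane arrangement and then read off both statements from the classical theory of such complements. A point $w=(w_1,\dots,w_n)\in\CC^n$ lies in $S_n(D)$ exactly when the monic polynomial $P_w(\lambda):=\lambda^n+\sum_{j=1}^n(-1)^jw_j\lambda^{n-j}$ has all its roots in $D$, i.e. $P_w(\mu_i)\neq 0$ for $i=1,\dots,N$. Since $P_w(\mu_i)=\mu_i^n+\sum_{j=1}^n(-1)^j\mu_i^{n-j}w_j$ is an affine function of $w$, each equation $P_w(\mu_i)=0$ defines an affine hyperplane $H_i\subset\CC^n$, so that
\[
S_n(D)=\CC^n\setminus\bigcup_{i=1}^N H_i=\pp^n\setminus\Big(H_\infty\cup\bigcup_{i=1}^N H_i\Big),
\]
where $H_\infty=\{w_0=0\}$ is the hyperplane at infinity. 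Thus $S_n(D)$ is the complement in $\pp^n$ of $N+1$ hyperplanes. (For $D=\CC\setminus\{0,1\}$, $n=2$ this recovers the two lines of the earlier remark.)

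First I would verify that these $N+1$ hyperplanes are in general position. In homogeneous coordinates $[w_0:\dots:w_n]$ the hyperplane $H_i$ has normal covector $u_i=\big((-1)^j\mu_i^{n-j}\big)_{j=0}^n$, while $H_\infty$ has covector $e_0=(1,0,\dots,0)$. General position is the assertion that any $n+1$ of these covectors are linearly independent. For $n+1$ of the $u_i$ this is the nonvanishing of a Vandermonde determinant in the distinct $\mu_i$ (after factoring out the invertible diagonal matrix $\operatorname{diag}((-1)^j)$); for $e_0$ together with $n$ of the $u_i$, expansion along the first row again reduces to a Vandermonde determinant in the corresponding $\mu_i$. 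Hence every $(n+1)$-subset is independent and the arrangement is in general position.

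For the first statement, when $N\ge 2n$ we have $q:=N+1\ge 2n+1$ hyperplanes in general position in $\pp^n$. I would then invoke the classical theorem of Green (see \cite{Kob 1998}) that the complement in $\pp^n$ of $q\ge 2n+1$ hyperplanes in general position is complete hyperbolic (indeed hyperbolically embedded in $\pp^n$). Consequently $S_n(D)$ is complete hyperbolic, i.e. Kobayashi complete.

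For the second statement, with $N<2n$ we have $2\le q=N+1\le 2n$, and I would exhibit an explicit entire curve. Relabel the hyperplanes as $G_1,\dots,G_q$ and split them into two groups $A,B$ of cardinalities $\le n$ each (possible since $2\le q\le 2n$). By general position the flats $\bigcap_{i\in A}G_i$ and $\bigcap_{i\in B}G_i$ are nonempty; choosing generic points $P_1,P_2$ in them, the line $M$ through $P_1,P_2$ lies in no $G_i$ and meets $\bigcup_i G_i$ only in $\{P_1,P_2\}$ (a line not contained in a hyperplane meets it in exactly one point, which is $P_1$ for $i\in A$ and $P_2$ for $i\in B$). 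Therefore $M\setminus\{P_1,P_2\}\cong\CC_*\subset S_n(D)$, and composing the inclusion with $\exp\colon\CC\to\CC_*$ produces a non-constant holomorphic image of $\CC$ in $S_n(D)$; in particular $S_n(D)$ is not Kobayashi hyperbolic. The identification as a hyperplane complement is the conceptual heart of the argument, after which everything is routine; the main point to get right is the general-position verification — the two Vandermonde computations, including the one involving $H_\infty$ — since both the citation in the first part and the line construction in the second part rest on it.
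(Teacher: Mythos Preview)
Your argument is correct and follows the same strategic outline as the paper: identify $S_n(\CC\setminus\{\mu_1,\dots,\mu_N\})$ as the complement in $\pp^n$ of $N+1$ hyperplanes, verify general position, and then invoke Green's theorem for $N\ge 2n$ and the classical non-hyperbolicity results for $N<2n$.

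The differences are in execution rather than strategy. For general position the paper uses the symmetric-product structure directly (Remark~\ref{remark-intersection}): the intersection $H_{j_1}\cap\dots\cap H_{j_k}$ is exactly $\{\pi_n(\mu_{j_1},\dots,\mu_{j_k},\lambda_1,\dots,\lambda_{n-k})\}$, hence an $(n-k)$-dimensional affine flat. Your Vandermonde computation is an equally valid and arguably cleaner route, and it has the advantage of making the role of $H_\infty$ completely explicit---something the paper leaves implicit when passing from $\CC^n$ to the projective statement of Green's theorem. For the non-hyperbolic case the paper simply cites Kiernan and Snurnitsyn, whereas you reproduce the standard line-through-two-flats construction; this is a pleasant self-contained addition. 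One small remark: when $q\le n$ your split may leave one of $A,B$ empty, in which case the line meets $\bigcup_i G_i$ in a single point and you obtain a copy of $\CC$ rather than $\CC_*$---harmless for the conclusion, but worth phrasing carefully.
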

\begin{proof} Simple calculations give the following equality 
\begin{equation}
S_n(\mathbb C\setminus\{\mu_1,\ldots,\mu_N\})=\mathbb C^n\setminus\bigcup_{j=1}^N H_j,
\end{equation}
where (compare Proposition~\ref{proposition:linearly-convex})
\begin{multline}
H_j:=\\
\{(\mu_j+\pi_{n-1,1}(\lambda),\mu_j\pi_{n-1,1}(\lambda)+\pi_{n-1,2}(\lambda),\ldots,\mu_j\pi_{n-1,n-1}(\lambda)):\lambda\in\mathbb C^{n-1}\}=\\
\left\{(\mu_j+z_1,\mu_j z_1+z_2,\ldots,\mu_jz_{n-1}):(z_1,\ldots,z_{n-1})\in\mathbb C^{n-1}\right\},
\end{multline}
$j=1,\ldots,n-1$. Note that the hyperplanes $H_j$ are in general position. In fact, for any $1\leq j_1<\ldots<j_k\leq n$ with $1\leq k\leq N$ we get that
\begin{equation}
H_{j_1}\cap\ldots\cap H_{j_k}=\{\pi_n(\mu_{j_1},\ldots,\mu_{j_k},\lambda_1,\ldots,\lambda_{n-k}):\lambda_l\in\mathbb C\}
\end{equation}
is an $(n-k)$-dimensional affine space (see Remark~\ref{remark-intersection}).


Then the theorem on Kobayashi completeness of the complement of the unions of $(2n+1$) hyperplanes in general position in the projective space (see \cite{Gre 1977}, \cite{Kob 1998}) and results on non-hyperbolicity of complements of $2n$ hyperplanes (see \cite{Kie 1969} and \cite{Snu 1986} or \cite{Kob 1998}) finish the proof.
\end{proof}

\begin{theorem}\label{theorem:kobayashi-complete} Let $D\subset\mathbb C$ be a domain and let $n\geq 2$ be fixed. If $\#(\mathbb C\setminus D)\geq 2n$ then $S_n(D)$ is Kobayashi complete. If $\#(\mathbb C\setminus D)<2n$ then $S_n(D)$ contains a non-constant holomorphic image of $\mathbb C$ and thus $S_n(D)$ is not Kobayashi hyperbolic. 
\end{theorem}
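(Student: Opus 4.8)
The plan is to reduce the general case of $S_n(D)$ to the special case of hyperplane complements already treated in Proposition~\ref{proposition:complement-hyperplanes}, handling the two regimes $\#(\CC\setminus D)\geq 2n$ and $\#(\CC\setminus D)<2n$ separately.

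For the completeness direction, suppose $\#(\CC\setminus D)\geq 2n$. First I would pick $N:=2n$ pairwise distinct points $\mu_1,\ldots,\mu_{2n}\in\CC\setminus D$, so that $D\subset \CC\setminus\{\mu_1,\ldots,\mu_{2n}\}=:D'$. The natural inclusion $D\hookrightarrow D'$ induces a holomorphic inclusion $S_n(D)\hookrightarrow S_n(D')$, and by contractivity of the Kobayashi pseudodistance we have $k_{S_n(D)}(z,w)\geq k_{S_n(D')}(z,w)$. By Proposition~\ref{proposition:complement-hyperplanes}, $S_n(D')$ is Kobayashi complete, hence Kobayashi hyperbolic; the inequality immediately gives hyperbolicity of $S_n(D)$. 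For $k$-finite compactness I would take a sequence $(z^k)_k\subset S_n(D)$ with no accumulation point in $S_n(D)$ and argue that $k_{S_n(D)}(z^1,z^k)\to\infty$. The subtle point is that a sequence leaving every compact subset of $S_n(D)$ need not leave $S_n(D')$ (since $D'$ is strictly larger), so the bare contractivity estimate is not by itself enough; this is where I expect the main obstacle to lie. To close this gap I would lift along the proper map $(\pi_n)_{|D^n}$: writing $z^k=\pi_n(\lambda_1^k,\ldots,\lambda_n^k)$ with $\lambda_j^k\in D$, the absence of an accumulation point in $S_n(D)$ forces, after passing to a subsequence, some coordinate sequence $(\lambda_{j_0}^k)_k$ to leave every compact subset of $D$, i.e.\ to converge either to a point of $\CC\setminus D$ or to $\infty$. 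One then composes with a suitable bounded holomorphic (or Kobayashi-distance-increasing) function on $D'$, or more robustly with the projection constructions used for hyperplane complements, to detect this escape at the level of $S_n(D')$ and conclude divergence of the Kobayashi distance.

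For the non-hyperbolicity direction, suppose $\#(\CC\setminus D)<2n$, say $\#(\CC\setminus D)=M<2n$. If $D=\CC$ the claim is clear since $S_n(\CC)=\CC^n$ contains copies of $\CC$, so assume $1\leq M\leq 2n-1$ and write $\CC\setminus D=\{\mu_1,\ldots,\mu_M\}$ (the case of $D$ missing finitely many points; if $\CC\setminus D$ is infinite we are in the first regime). Then $D\supset \CC\setminus\{\mu_1,\ldots,\mu_M\}$ forces $S_n(D)\supset S_n(\CC\setminus\{\mu_1,\ldots,\mu_M\})$, and by the second assertion of Proposition~\ref{proposition:complement-hyperplanes} the latter contains a nonconstant holomorphic image of $\CC$. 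That same image sits inside the larger domain $S_n(D)$, and the presence of an entire curve rules out Kobayashi hyperbolicity by the usual Brody-type argument (the Kobayashi distance vanishes along the image of $\CC$). I would remark that when $\CC\setminus D$ is infinite it contains at least $2n$ points and we are back in the completeness regime, so the dichotomy in the statement is exhaustive.

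The one genuinely delicate step, as noted, is verifying $k$-finite compactness rather than mere hyperbolicity in the first regime, since enlarging $D$ to $D'$ both enlarges the domain and shrinks the Kobayashi distance, and a properly divergent sequence in $S_n(D)$ may stay in a compact part of $S_n(D')$ when the escaping coordinate tends to a point of $D'\setminus D$. The clean fix is to work with the largest admissible hyperplane complement: if $\CC\setminus D$ is finite with $\#(\CC\setminus D)=N\geq 2n$, take all $N$ removed points so that $D'=\CC\setminus\{\mu_1,\ldots,\mu_N\}$ has exactly the same finite complement as $D$ (when $D$ itself is a finite-point complement), whence $S_n(D)=S_n(D')$ and completeness is immediate from Proposition~\ref{proposition:complement-hyperplanes}; if $\CC\setminus D$ is infinite, boundedness-type or exhaustion arguments via the proper map $\pi_n$ and the planar completeness of $D$ (which is automatic once $\#(\CC\setminus D)\geq 2$) supply the needed divergence coordinatewise. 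I would organize the final write-up around this case split so that the proper-map lift carries the burden only in the genuinely unbounded infinite-complement case.
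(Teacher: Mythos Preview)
Your non-hyperbolicity argument and your hyperbolicity argument (via a fixed $2n$-point set $T\subset\CC\setminus D$ and contractivity) are correct and match the paper. You also correctly isolate the real obstacle for $k$-finite compactness: a sequence leaving $S_n(D)$ may converge to a point of $S_n(D')\setminus S_n(D)$, so the single inclusion $S_n(D)\subset S_n(D')$ with $D'=\CC\setminus T$ for one \emph{fixed} $T$ is insufficient.

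However, your remedy for this obstacle has a genuine gap. Your case split disposes only of the trivial situation where $D$ itself is a finite-point complement; for general $D$ with infinite complement you invoke ``planar completeness of $D$'' and unspecified ``exhaustion arguments''. But Kobayashi completeness of $D$ does not by itself imply completeness of $S_n(D)$ --- that is exactly the statement being proved --- and no bounded-holomorphic-function argument is available, since $D$ may carry no nonconstant bounded holomorphic functions (already $\CC\setminus\{\mu_1,\ldots,\mu_{2n}\}$ has none). The ``projection constructions'' you mention are not made concrete either.

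The paper closes the gap with a single idea you are missing: choose $T$ \emph{adaptively}, depending on the limit of the escaping sequence. If $z^k\to z^0=\pi_n(\mu_1,\ldots,\mu_n)\in\partial S_n(D)$ with $\mu_1\in\partial D\subset\CC\setminus D$, pick any $T\subset\CC\setminus D$ of cardinality $2n$ with $\mu_1\in T$. Then $z^0\notin S_n(\CC\setminus T)$, so $z^0\in\partial S_n(\CC\setminus T)$ as well, and the completeness of $S_n(\CC\setminus T)$ from Proposition~\ref{proposition:complement-hyperplanes} together with $k_{S_n(D)}\geq k_{S_n(\CC\setminus T)}$ gives $k_{S_n(D)}(z^1,z^k)\to\infty$. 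The case $\|z^k\|\to\infty$ is handled by any fixed $T$. No lift through $\pi_n$ and no finite/infinite case split on $\CC\setminus D$ is needed.
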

\begin{proof} In view of the previous result it is sufficient to show the first part of the theorem. Let $T\subset\mathbb C\setminus D$ be any set with $2n$-elements. Then $S_n(D)\subset S_n(\mathbb C\setminus T)$ so the contractivity of the Kobayashi pseudodistance implies that $k_{S_n(D)}\geq k_{S_n(\mathbb C\setminus T)}$, which together with the previous result implies the Kobayashi hyperbolicity of $S_n(D)$. To prove the Kobayashi completeness it is sufficient to show that $k_{S_n(D)}(z^1,z^k)\to\infty$ for any sequence $(z^k)_k\subset D$ such that $||z^k||\to\infty$ or $z^k\to z^0\in\partial S_n(D)$. In the first case the result follows from Proposition~\ref{proposition:complement-hyperplanes} (as $k_{S_n(\mathbb C\setminus T)}(z^1,z^k)\to\infty$). In the case 
$z^k\to z^0=\pi_n(\mu_1,\ldots\mu_n)$, where $\mu_1\in\partial D$, $\mu_j\in\overline{D}$, $j=2,\ldots,n$ we choose a set $T\subset\overline{D}$ having $2n$ elements such that $\mu_j\in T$, $j=1,\ldots,n$ which is possible due to the assumptions. Then $D\subset \mathbb C\setminus T$ and $z^0\in\partial S_n(\mathbb C\setminus T)$ so
\begin{equation}
k_{S_n(D)}(z^1,z^k)\geq k_{S_n(\mathbb C\setminus T)}(z^1,z^k).
\end{equation}
And the last expression tends to infinity by Proposition~\ref{proposition:complement-hyperplanes}, which finishes the proof.
\end{proof}

\begin{remark} As we saw in the proof of Theorem~\ref{theorem:kobayashi-complete} the description of Kobayashi complete symmetric products of planar domains relied not only on the linear convexity of $S_n(D)$ but also on the special geometry of $S_n(D)$. It could be interesting to see whether the following could be true: a linearly convex domain, which admits a certain number (at least $2n$) of hyperplanes in a general position disjoint from the domain, is Kobayashi complete.

\end{remark}

\subsection{Carath\'eodory hyperbolicity}
It turns out that in the class of symmetric products of planar domains the Carath\'eodory hyperbolicity is preserved under taking symmetric powers.

\begin{proposition}\label{proposition:caratheodory-hyperbolic} Let $D$ be a domain in $\mathbb C$. Then $D$ is Carath\'eodory hyperbolic if and only if $S_n(D)$ is Carath\'eodory hyperbolic. Consequently, $D$ is $c$-finitely compact if and only if $S_n(D)$ is $c$-finitely compact.
\end{proposition}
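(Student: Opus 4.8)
The plan is to establish the two implications of the equivalence separately and then obtain the statement on $c$-finite compactness by combining with Theorem~\ref{theorem:c-finitely-compact}. One implication is already essentially available: if $S_n(D)$ is Carath\'eodory hyperbolic, then so is $D$. This is exactly the content of the remark on $d$-hyperbolicity following Theorem~\ref{theorem:c-finitely-compact}, specialized to $d=c$ and to the planar realization. Concretely, for $w_1\neq z_1$ in $D$ and any fixed $w_2\in D$, the holomorphic embedding $w\mapsto\pi_n(w,w_2,\ldots,w_2)$ together with contractivity of $c$ give
\[
c_D(w_1,z_1)\geq c_{S_n(D)}(\pi_n(w_1,w_2,\ldots,w_2),\pi_n(z_1,w_2,\ldots,w_2))>0,
\]
the last two base points being distinct since $w_1\neq z_1$. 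Thus only the forward implication requires work.

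For the forward implication I assume $D$ is Carath\'eodory hyperbolic, which by definition means that the functions $f\in\mathcal O(D,\mathbb D)$ separate the points of $D$. Fix distinct points $a=\langle w_1,\ldots,w_n\rangle$ and $b=\langle z_1,\ldots,z_n\rangle$ of $S_n(D)$, i.e.\ two distinct multisets in $D$. For each $f\in\mathcal O(D,\mathbb D)$ I would use the induced holomorphic map
\[
\Phi_f\colon S_n(D)\to\mathbb G_n,\qquad \Phi_f(\pi_n(u_1,\ldots,u_n))=\pi_n(f(u_1),\ldots,f(u_n)),
\]
which is well defined and holomorphic as the symmetric map induced by $f$ (cf.\ Section~\ref{section:general-case}) and takes values in $\mathbb G_n=S_n(\mathbb D)$ because $f(D)\subset\mathbb D$. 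Since $\mathbb G_n$ is bounded, hence Carath\'eodory hyperbolic, contractivity of $c$ gives $c_{S_n(D)}(a,b)\geq c_{\mathbb G_n}(\Phi_f(a),\Phi_f(b))$, so it suffices to produce a single $f$ with $\Phi_f(a)\neq\Phi_f(b)$, equivalently with the multisets $\{f(w_1),\ldots,f(w_n)\}$ and $\{f(z_1),\ldots,f(z_n)\}$ distinct.

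The step I expect to be the main obstacle is producing such an $f$. It suffices to find $f\in\mathcal O(D,\mathbb D)$ injective on the finite set $P:=\{w_1,\ldots,w_n,z_1,\ldots,z_n\}$, because an injective $f$ carries distinct multisets supported on $P$ to distinct multisets. To obtain it I would, for every pair of distinct points $p,q\in P$, pick $f_{p,q}\in\mathcal O(D,\mathbb D)$ separating them (available by Carath\'eodory hyperbolicity of $D$), and then pass to a generic linear combination: in the finite-dimensional space $V:=\operatorname{span}\{f_{p,q}\}$ of bounded holomorphic functions, each coincidence condition $g(p)=g(q)$ defines a proper linear subspace (proper because $f_{p,q}$ violates it), and a complex vector space is never a union of finitely many proper subspaces; hence some $g\in V$ is injective on $P$, and rescaling $g$ by a small positive constant yields the required $f\in\mathcal O(D,\mathbb D)$. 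This completes the forward implication and therefore the equivalence.

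For the concluding clause I would combine this equivalence with Theorem~\ref{theorem:c-finitely-compact}. By definition a space is $c$-finitely compact precisely when it is simultaneously Carath\'eodory hyperbolic and quasi $c$-finitely compact. Theorem~\ref{theorem:c-finitely-compact}, applied to $X=D$ and its realization $S_n(D)$, yields that $D$ is quasi $c$-finitely compact if and only if $S_n(D)$ is, while the equivalence proved above does the same for Carath\'eodory hyperbolicity; conjoining the two gives that $D$ is $c$-finitely compact if and only if $S_n(D)$ is.
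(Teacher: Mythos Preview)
Your proof is correct and follows the same overall strategy as the paper: use the induced map $\Phi_f:S_n(D)\to\mathbb G_n$ and reduce to finding a single $f\in\mathcal O(D,\mathbb D)$ for which $\Phi_f(a)\neq\Phi_f(b)$, then invoke the Carath\'eodory hyperbolicity of the bounded domain $\mathbb G_n$. The one genuine difference is in how the separating $f$ is produced. You build an $f$ that is injective on the whole finite set $P=\{w_j,z_j\}$ by a genericity argument in a finite-dimensional space of bounded holomorphic functions (a complex vector space is not a finite union of proper hyperplanes). The paper instead exploits the one-variable setting directly: it picks $\lambda_1\notin\{\mu_1,\ldots,\mu_n\}$, takes any nonconstant bounded holomorphic $g$ with $g(\lambda_1)=0$ (available by $c$-hyperbolicity), divides out the zeros of $g$ at the $\mu_j$'s, and rescales to get $f$ with $f(\lambda_1)=0$ and $f(\mu_j)\neq 0$ for all $j$; then already the last coordinate of $\pi_n$ (the product) separates $\Phi_f(a)$ from $\Phi_f(b)$. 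Your route is slightly more robust (it uses only point-separation, not the ring structure of $\mathcal O(D)$, and would transplant to higher-dimensional $X$), while the paper's is shorter and more explicit. The concluding clause on $c$-finite compactness is handled the same way in both, by combining with Theorem~\ref{theorem:c-finitely-compact}.
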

\begin{proof} It is sufficient to show that $c$-hyperbolicity of $D$ implies that of $S_n(D)$. Assume that $D$ is Carath\'eodory hyperbolic. Let $\pi_n(\lambda_1,\ldots,\lambda_m)\neq \pi_n(\mu_1,\ldots,\mu_n)$ with $\lambda_j,\mu_j\in D$, $j=1,\ldots,n$. Then without loss of generality we may assume that 
$\lambda_1\not\in\{\mu_1,\ldots,\mu_n\}=\{x_1,\ldots,x_k\}$. Then we claim that there is an $f\in\mathcal O(D,\mathbb D)$ with $f(\lambda_1)=0$, $f(\mu_j)\neq 0$. In fact, the Carath\'eodory hyperbolicity implies that there is a non-constant bounded $g\in\mathcal O(D)$ such that $g(\lambda_1)=0$. Then the function $h(\cdot):=\frac{g(\cdot)}{\prod_{j=1}^k(\cdot-x_j)^{r_j}}$, where $r_j$ is the multiplicity of $g$ at $x_j$, is a bounded holomorphic function with $h(\lambda_1)=0$ and $h(\mu_j)\neq 0$ which gives the claim. Take the function $f$ from the claim. Then
$z:=\pi_n(f(\lambda_1),\ldots,f(\lambda_n))\neq \pi_n(f(\mu_1),\ldots,f(\mu_n))=:w$, so
\begin{equation}
c_{S_n(D)}(\pi_n(\lambda_1,\ldots,\lambda_n),\pi_n(\mu_1,\ldots,\mu_n))\geq c_{\mathbb G_n}(z,w)>0.
\end{equation}

\end{proof}

\begin{remark}
Recall that in the class of planar domains by a recent result (Theorem 1 in \cite{Edi 2018}) two closely related notions of Carath\'eodory completeness and $c$-finite compactness are equivalent. Moreover, they are both equivalent to the fact that any boundary point $z$ of $D$ is an $A(D,\{z\})$ peak point. Note that although the $c$-finite compactness is equivalent to $c$-finite compactness of $S_n(D)$ (Proposition~\ref{proposition:caratheodory-hyperbolic}) we did not prove the equivalence of $c$-finite compactness of $S_n(D)$ with the fact that any boundary point $z$ of $S_n(D)$ is a weak $A(D,\{z\})$ point.
\end{remark}

\end{document}